\theoremstyle{plain}
\newtheorem{theorem}                {Theorem}      [section]
\newtheorem*{theorem*}                {Theorem \ref{thm:appl}}
\newtheorem{corollary}    [theorem]  {Corollary}
\theoremstyle{definition}
\newtheorem{example}      [theorem]  {Example}
\newtheorem{remark}       [theorem]  {Remark}
\newtheorem{definition}   [theorem]  {Definition}
\numberwithin{equation}{section}
\DeclareMathOperator{\trace}{trace} 
\DeclareMathOperator{\Vol}{Vol} 
\DeclareMathOperator{\Voll}{Vol_{\Sigma}} 
\newcommand{\field}[1]{\mathbb{#1}}
\newcommand{\real}{\field{R}}
\newcommand{\vol}{v_{\Sigma}}
       \newcommand{\De}{\Delta}
\newcommand{\vep}{\varepsilon}
\newcommand{\la} {\lambda}
       \newcommand{\Si}{\Sigma}
\begin{document}

\title[CMC Surfaces and Finite  Total Curvature]
{Constant Mean Curvature Surfaces in $\mathbb{M}^2(c)\times\real$ and Finite Total Curvature}

\author[Batista]{M\'arcio Batista}     
\address{Institute of Mathematics, Federal University of Alagoas, 
Macei\'o,  AL, CEP 57072-970, Brazil}
\email{mhbs@mat.ufal.br}

\author[Cavalcante]{Marcos P. Cavalcante}     
\address{Institute of Mathematics, Federal University of Alagoas, 
Macei\'o,  AL, CEP 57072-970, Brazil}
\email{marcos@pos.mat.ufal.br}

\author[Fetcu]{Dorel Fetcu}
\address{Department of Mathematics and Informatics, Gh. Asachi Technical University
of Ia\c si, Bd. Carol I, no. 11, 700506 Ia\c si, Romania}
\email{dfetcu@math.tuiasi.ro}

\subjclass[2010]{53C42, 58J50}
\date{July 13, 2015}
\keywords{constant mean curvature surfaces, surfaces with parallel mean curvature, 
surfaces with finite total curvature}

\thanks{The first author was supported by FAPEAL/Brazil. 
The second author was supported by CNPq/Brazil. 
The third author was supported by the grant BJT 373672/2013--6 of CNPq, Brazil.}

\begin{abstract} We consider surfaces with parallel mean curvature vector field and finite total curvature in product spaces of type $\mathbb{M}^n(c)\times\mathbb{R}$, where $\mathbb{M}^n(c)$ is a space form, and characterize certain of these surfaces. When $n=2$, our results are similar to those obtained in \cite{bds} for surfaces with constant mean curvature in space forms.
\end{abstract}

\maketitle

\section{Introduction}

A classical result in the theory of constant mean curvature surfaces (cmc surfaces), proved by H.~Hopf \cite{H}, 
says that round spheres are the only cmc spheres in Euclidean space. Hopf's result was extended, by S.-S.~Chern \cite{C}, to spheres immersed in $3$-dimensional space forms. The key ingredient in both papers \cite{H} and \cite{C} is the 
fact that there exists a quadratic differential form on the surface which is holomorphic when the mean curvature is constant. 
This form is given by the $(2,0)$-part of the quadratic form $\mathcal{Q}$ defined on a cmc surface 
$\Sigma$ in a space form by
$$
\mathcal{Q}(X,Y)=\langle AX,Y\rangle,
$$
for any vector fields $X$ and $Y$ tangent to $\Sigma$, where $A$ is the self adjoint operator associated
to the second fundamental form of $\Sigma$. We note that, if $A_0=A-HI$ is the traceless part of $A$, where $H$ is the mean curvature of the surface
$\Sigma$, then it is easy to see that $A_0$ vanishes, which means that $\Sigma$ is an umbilical surface, 
if and only if the $(2,0)$-part of $\mathcal{Q}$ vanishes.
 
Since the traceless second fundamental form of a surface measures how much 
it deviates from being totally umbilical, it is natural to study the geometry and topology of
complete cmc surfaces with finite total curvature, in the sense that 
the integral of $|A_0|^2$ is finite.
For instance,  P.~B\'erard, M.~do Carmo, and W.~Santos \cite{bds} proved that a cmc surface
in a space form $M^3(c)$, $c\leq 0$  with $H^2>c$ and  finite total curvature  must be compact.
This result was extend to the case of higher codimension in \cite{bs,ccs}.
On the other hand, Ph.\! Castillon \cite{ca} considered cmc hypersurfaces in hyperbolic spaces 
$\mathbb{H}^{n+1}$ with $H^2<1$ and finite total curvature. He showed, among others, that such 
hypersurfaces are diffeomorphic to the interior of a compact manifold. 
The geometry of cmc surfaces in space forms is also related to the finiteness of the index 
of the Jacobi operator as described  in the works of A.~da Silveira \cite{S} and P.~B\'erard, M.~do Carmo, and W.~Santos \cite{bds2}.

The next step in the study of cmc surfaces was to consider isometric immersions on product spaces
of type $M^2(c)\times \real$, where $\mathbb{M}^2(c)$ is a space form of constant curvature $c$. 
In this case,  U.~Abresch and H.~Rosenberg, in their celebrated paper \cite{ar},
introduced a differential form (the Abresch-Rosenberg differential) which is holomorphic on cmc surfaces
and described those cmc surfaces on which this holomorphic differential vanishes.

Taking into account the relation between the quadratic form $\mathcal{Q}$
and the operator $A_0$, it seems natural to ask if an operator similar to $A_0$
can be used to study the geometry of cmc surfaces in product spaces.

In our paper we give an affirmative answer to this question and we actually obtain some more 
general results, for surfaces with parallel mean curvature vector field (pmc surfaces) immersed 
in spaces of type $\mathbb{M}^n(c)\times\mathbb{R}$, where $\mathbb{M}^n(c)$ is a space form. 
Thus, we use the operator $S$ studied in \cite{b,FR} to characterize certain pmc 
surfaces with \emph{finite total curvature}, i.e., such that the integral of $|S|^2$ is finite. 
We pay a special attention to cmc  surfaces in $\mathbb{M}^2(c)\times\mathbb{R}$ and obtain 
similar results to those in \cite{bds} for cmc surfaces in space forms.

Our main result shows that if the norm of the second fundamental form of a complete non-minimal 
pmc surface $\Sigma$ in $\mathbb{M}^n(c)\times\mathbb{R}$ is bounded and the surface has finite total 
curvature, then the function $|S|$ goes to zero uniformly at infinity (Theorem \ref{zero:pmc} and Corollary \ref{zero}). 
As applications, we prove compactness results for pmc surfaces (Theorem \ref{theo-compact0} and Corollaries \ref{theo-compact1}
and \ref{theo-compact2}), as well as lower bounds estimates for the bottom of the essential spectrum of $\Sigma$ (Theorems \ref{thm_spec} and
\ref{index}).

\noindent {\bf Acknowledgements.} The authors would like to thank the referee for very valuable comments and suggestions.

\section{Preliminaries}

Let $\mathbb{M}^n(c)$ be a space form, i.e., a simply connected $n$-dimensional manifold with constant sectional curvature $c$, and consider the product space $\bar M=\mathbb{M}^n(c)\times\mathbb{R}$. Then the curvature tensor $\bar R$ of $\bar M$ is given by
\begin{align}\label{eq:barR}
\bar R(X,Y)Z=&c\{\langle Y, Z\rangle X-\langle X, Z\rangle Y-\langle Y,\xi\rangle\langle Z,\xi\rangle X+\langle X,\xi\rangle\langle Z,\xi\rangle Y\\\nonumber &+\langle X,Z\rangle\langle Y,\xi\rangle\xi-\langle Y,Z\rangle\langle X,\xi\rangle\xi\},
\end{align}
for any tangent vector fields $X$, $Y$ and $Z$, where $\xi$ is the unit vector field tangent to $\mathbb{R}$.

Now, let us consider $\Sigma$ an isometrically immersed surface in $\mathbb{M}^n(c)\times\mathbb{R}$. 
The second fundamental form $\sigma$ of $\Sigma$ is defined by the equation of Gauss
$$
\bar\nabla_XY=\nabla_XY+\sigma(X,Y),
$$
for any vector fields $X$ and $Y$ tangent to the surface, where $\bar\nabla$ and $\nabla$ are the Levi-Civita connections on $\mathbb{M}^n(c)\times\mathbb{R}$ and $\Sigma$, respectively. Then the mean curvature vector field $\vec H$ of $\Sigma$ is given by $\vec H=(1/2)\trace\sigma$. The shape operator $A$ and the normal connection $\nabla^{\perp}$ are defined by the equation of Weingarten
$$
\bar\nabla_XV=-A_VX+\nabla^{\perp}_XV,
$$
for any tangent vector field $X$ and any normal vector field $V$. 

We also have the Gauss equation of the surface $\Sigma$ in $\bar M=M^n(c)\times\mathbb{R}$
\begin{align}\label{Gauss}
\langle R(X,Y)Z,W\rangle=&\langle\bar R(X,Y)Z,W\rangle+\langle\sigma(Y,Z),\sigma(X,W)\rangle\\\nonumber&-\langle\sigma(X,Z),\sigma(Y,W)\rangle,
\end{align}
where $X$, $Y$, $Z$ and $W$ are vector fields tangent to $\Sigma$ and $R$ is the curvature tensor corresponding to $\nabla$.

\begin{definition} If the mean curvature vector field $\vec H$ of a surface $\Sigma$ is
parallel in the normal bundle, i.e., $\nabla^{\perp}\vec H=0$, then $\Sigma$ is called a \textit{pmc surface}. When $n=2$, a pmc surface in $\mathbb{M}^2(c)\times\real$ is just a surface with constant mean curvature $H=|\vec H|$ and it is called 
a \textit{cmc surface}. 
\end{definition}

In \cite{AdCT}, the authors showed that the $(2,0)$-part of the quadratic form $Q$ 
defined on a pmc surface $\Sigma$ immersed in $\mathbb{M}^n(c)\times\mathbb{R}$ by
$$
Q(X,Y)=2\langle\sigma(X,Y),\vec H\rangle-c\langle X,\xi\rangle\langle Y,\xi\rangle
$$ 
is holomorphic. When $n=2$, this is just the Abresch-Rosenberg differential introduced in 
\cite{ar}.  In \cite{b} and then in \cite{FR}, the authors considered an operator $S$ on a non-minimal pmc surface $\Sigma$ given by  
$$
2H\langle SX,Y\rangle=Q(X,Y)-\frac{\trace Q}{2}\langle X,Y\rangle,
$$
or, equivalently,
\begin{equation}\label{s}
SX=\frac{1}{H}A_{\vec H}X-\dfrac{c}{2H}\langle X,T\rangle T+\dfrac{c}{4H}|T|^2X-HX,
\end{equation}
where $X$ and $Y$ are vector fields tangent to $\Sigma$ and $T$ is
the tangential component of the parallel vector field $\xi$. Two remarkable properties of this operator are the facts that $S$ vanishes if and only if the $(2,0)$-part of $Q$ vanishes and also that $S$ satisfies a Simons type equation.

\begin{theorem}[\cite{b,FR}]\label{thm:delta} 
Let $\Sigma$ be an immersed non-minimal pmc surface in  $\mathbb{M}^n(c)\times\real$. 
Then
$$
\frac{1}{2}\Delta|S|^2=2K_{\Sigma}|S|^2+|\nabla S|^2,
$$
where $K_{\Sigma}$ is the Gaussian curvature of the surface.
\end{theorem}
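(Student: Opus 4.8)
The plan is to reduce the statement to three structural properties of the operator $S$ and then run the classical Bochner--Weitzenb\"ock argument for Codazzi tensors on a surface. Directly from \eqref{s}, $S$ is self-adjoint, since $A_{\vec H}$ is self-adjoint and the terms $\langle X,T\rangle T$, $|T|^2X$, $HX$ are manifestly symmetric; and $S$ is trace-free, being built from the trace-free part of $Q$. The essential third property is that $S$ is a \emph{Codazzi tensor}, i.e. $(\nabla_X S)Y=(\nabla_Y S)X$ for all tangent $X,Y$, and I would deduce this from the holomorphicity of the $(2,0)$-part of $Q$ established in \cite{AdCT}. Writing $h_S(X,Y)=\langle SX,Y\rangle$, we have $2H\,h_S(X,Y)=Q(X,Y)-\tfrac12(\tr Q)\langle X,Y\rangle$; since the metric has no $(2,0)$-part, the $(2,0)$-part of $Q$ agrees, up to the nonzero constant factor $2H$, with that of $h_S$. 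For a symmetric trace-free $2$-tensor on a surface, holomorphicity of this $(2,0)$-part is classically equivalent to the Codazzi equation, which is what I need.

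Granting these properties, the computation is purely tensorial and, notably, independent of the codimension. First I would record the Bochner identity
$$\tfrac12\Delta|S|^2=\langle\Delta S,S\rangle+|\nabla S|^2,$$
where $\Delta S$ is the rough Laplacian of $S$, so the problem reduces to showing $\langle\Delta S,S\rangle=2K_\Sigma|S|^2$. In a local orthonormal frame geodesic at the point, I would use the Codazzi symmetry $\nabla_iS_{jk}=\nabla_jS_{ik}$ to rewrite $\sum_i\nabla_i\nabla_iS_{jk}$ as $\sum_i\nabla_i\nabla_jS_{ik}$ and then commute the two derivatives via the Ricci identity. The resulting non-commutator term is $\nabla_j(\Div S)_k$, and it vanishes because a trace-free Codazzi tensor is divergence-free: $(\Div S)_k=\sum_i\nabla_iS_{ik}=\nabla_k(\tr S)=0$.

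What survives are the two curvature terms from the Ricci identity, giving $(\Delta S)_{jk}=-\sum_{i,m}R_{ijim}S_{mk}-\sum_{i,m}R_{ijkm}S_{im}$. Since $\Sigma$ is a surface, its intrinsic curvature is determined by the Gaussian curvature, $R_{abcd}=K_\Sigma(\delta_{bc}\delta_{ad}-\delta_{ac}\delta_{bd})$. Substituting, the first sum contracts to $K_\Sigma S_{jk}$, while the second yields $K_\Sigma(S_{jk}-\delta_{jk}\tr S)=K_\Sigma S_{jk}$, again because $S$ is trace-free; hence $\Delta S=2K_\Sigma S$ and $\langle\Delta S,S\rangle=2K_\Sigma|S|^2$, which is exactly the asserted identity.

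I expect the only genuine difficulty to be the Codazzi step, namely faithfully converting the holomorphicity of the $(2,0)$-part of $Q$ into $(\nabla_X S)Y=(\nabla_Y S)X$; this is where the parallelism of $\vec H$ and the constancy of $H$ enter the picture, through \cite{AdCT}, and one must check that normalizing $Q$ to its trace-free part $S$ leaves the $(2,0)$-part undisturbed. Everything afterwards---the Bochner identity, the vanishing of $\Div S$, and the curvature contraction on a $2$-dimensional base---is routine and uses nothing about $n$, which explains why the same Simons-type equation persists in arbitrary codimension.
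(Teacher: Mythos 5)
Your proposal is correct, and it reconstructs what is in fact the standard proof: note that the paper does not prove Theorem \ref{thm:delta} at all, but quotes it from \cite{b} and \cite{FR}, where the argument has exactly the structure you use --- show that $S$ is a symmetric, traceless Codazzi operator, then run the Bochner/Ricci-identity computation, which on a surface yields $\Delta S=2K_\Sigma S$ and hence the Simons-type identity. The one point where you diverge slightly from those sources is the Codazzi step: there it is typically verified directly from the Codazzi equation of the immersion (using $\nabla^{\perp}\vec H=0$ and the structure of $\bar R$), whereas you deduce it from the holomorphicity of the $(2,0)$-part of $Q$ established in \cite{AdCT}, via the classical equivalence (Chern's lemma) between holomorphicity of the $(2,0)$-part and the Codazzi property for traceless symmetric $2$-tensors on a Riemann surface; this route is legitimate, and your accompanying check is the right one --- since $H$ is a nonzero constant on a non-minimal pmc surface and the metric has vanishing $(2,0)$-part, $2H\,\langle S\cdot,\cdot\rangle^{(2,0)}=Q^{(2,0)}$, so holomorphicity transfers to $S$ undisturbed.
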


\begin{corollary}\label{simons2} 
Let $\Sigma$ be an immersed non-minimal pmc surface in 
$\mathbb{M}^n(c)\times\mathbb{R}$ such that 
$\mu=\sup_{\Sigma}(|\sigma|^2-(1/H)^2|A_{\vec H}|^2)<+\infty$. Then, the function $u=|S|$ satisfies the following inequality, in the sense of distribution,
$$
-\De u \leq au^3 + bu,
$$
where $a$ and $b$ are constants depending on $c$, $H$, and $\mu$.
\end{corollary}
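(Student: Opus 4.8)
The plan is to feed the Simons-type identity of Theorem~\ref{thm:delta} into a Kato inequality, convert the Gaussian curvature into second-fundamental-form data via the Gauss equation~\eqref{Gauss}, and then absorb everything into the hypothesis $\mu<+\infty$. First I would work on the open set $\{u>0\}$, where $u=|S|$ is smooth. Writing $|S|^2=u^2$ and using $\frac{1}{2}\Delta(u^2)=u\,\Delta u+|\nabla u|^2$, Theorem~\ref{thm:delta} becomes $u\,\Delta u=2K_{\Sigma}u^2+|\nabla S|^2-|\nabla u|^2$. Kato's inequality $\big|\nabla|S|\big|\leq|\nabla S|$ makes the last two terms nonnegative, so after dividing by $u>0$ one gets $\Delta u\geq 2K_{\Sigma}u$, that is
$$-\Delta u\leq -2K_{\Sigma}u.$$
On the closed set $\{u=0\}$ every point is a global minimum of $u\geq 0$, so the inequality persists there in the weak sense; this zero-set regularity is a standard but necessary technical point to address.

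Next I would turn $-2K_{\Sigma}u$ into a polynomial in $u$. From~\eqref{eq:barR} the ambient sectional curvature of a tangent plane equals $c(1-|T|^2)$, and the purely two-dimensional identity $\langle\sigma(e_1,e_1),\sigma(e_2,e_2)\rangle-|\sigma(e_1,e_2)|^2=2H^2-\frac{1}{2}|\sigma|^2$, obtained by comparing $4H^2=|\trace\sigma|^2$ with $|\sigma|^2$, turns the Gauss equation~\eqref{Gauss} into
$$K_{\Sigma}=c(1-|T|^2)+2H^2-\tfrac{1}{2}|\sigma|^2,$$
so that $-2K_{\Sigma}u=\big(|\sigma|^2-4H^2-2c(1-|T|^2)\big)u$. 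Since $|T|\leq 1$, the term $2c(1-|T|^2)$ is bounded by $2|c|$, so everything now hinges on controlling $|\sigma|^2$.

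Finally I would bring in the hypothesis. Writing $|\sigma|^2=\frac{1}{H^2}|A_{\vec H}|^2+\big(|\sigma|^2-\frac{1}{H^2}|A_{\vec H}|^2\big)\leq\frac{1}{H^2}|A_{\vec H}|^2+\mu$, I use~\eqref{s} to express $\frac{1}{H}A_{\vec H}=S+C$, where $CX=\frac{c}{2H}\langle X,T\rangle T-\frac{c}{4H}|T|^2X+HX$ is self-adjoint with eigenvalues $H\pm\frac{c}{4H}|T|^2$; hence $|C|^2=2H^2+\frac{c^2}{8H^2}|T|^4$ is bounded by a constant depending only on $c$ and $H$. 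The essential point is to estimate $\frac{1}{H^2}|A_{\vec H}|^2=|S+C|^2$ by the Young-type splitting $|S+C|^2\leq 2|S|^2+2|C|^2=2u^2+2|C|^2$ rather than by the triangle inequality: this kills the cross term and prevents a $u^2$ term from appearing, leaving only the odd powers $u^3$ and $u$. Substituting back yields
$$-\Delta u\leq -2K_{\Sigma}u\leq 2u^3+\big(2|C|^2+\mu-4H^2-2c(1-|T|^2)\big)u\leq au^3+bu,$$
with $a=2$ and $b=\sup_{\Sigma}\big(2|C|^2+\mu-4H^2-2c(1-|T|^2)\big)<+\infty$ depending only on $c$, $H$ and $\mu$. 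I expect the main obstacle to be precisely the bookkeeping in this last step: one must eliminate the would-be $u^2$ contribution via the splitting above and verify that every remaining coefficient is genuinely bounded on all of $\Sigma$, which is exactly where the finiteness of $\mu$ and the bound $|T|\leq 1$ are used.
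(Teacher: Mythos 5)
Your proposal is correct, and it shares the paper's overall skeleton (Theorem \ref{thm:delta} plus Kato's inequality to get $-\Delta u\leq -2K_{\Sigma}u$, the Gauss equation to handle $K_{\Sigma}$, and the hypothesis on $\mu$ to control the codimension), but it resolves the key algebraic step by a genuinely different decomposition. The paper computes $\det A_3=H^2-\frac{1}{2}|S|^2-\frac{c^2}{16H^2}|T|^4-\frac{c}{2H}\langle ST,T\rangle$ from \eqref{s}, so the cross term $\langle ST,T\rangle$ appears explicitly in $K_{\Sigma}$; it is then estimated by Schwarz together with $|ST|=\frac{1}{\sqrt{2}}|T||S|$, which produces a quadratic term $-\frac{|c|}{\sqrt{2}H}|S|^2$ that must finally be absorbed into the cubic and linear terms via $2x^2\leq x^3+x$. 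You instead write $K_{\Sigma}=c(1-|T|^2)+2H^2-\frac{1}{2}|\sigma|^2$ directly, bound $|\sigma|^2\leq\frac{1}{H^2}|A_{\vec H}|^2+\mu$, and control $\frac{1}{H^2}|A_{\vec H}|^2=|S+C|^2\leq 2|S|^2+2|C|^2$ by Young's inequality; since $\langle S,C\rangle=\frac{c}{2H}\langle ST,T\rangle$, this is the same cross term the paper fights with, but your splitting suppresses it before it can generate a $u^2$ contribution, so no absorption step is needed and you land immediately on $a=2$. The two routes are algebraically equivalent (in dimension two, $|A_3|^2=4H^2-2\det A_3$), and neither set of constants dominates the other: the paper's cubic coefficient $1+\frac{|c|}{2\sqrt{2}H}$ beats your $a=2$ exactly when $|c|<2\sqrt{2}H$. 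A further small merit of your write-up is that you flag the regularity issue at the zero set of $|S|$, where $u$ is only Lipschitz and the inequality must be read in the weak (distributional) sense via the standard regularization $u_{\varepsilon}=\sqrt{|S|^2+\varepsilon^2}$; the paper passes over this silently, and its own manipulation of $\Delta|S|$ needs the same remark.
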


\begin{proof} 
Let us consider the local orthonormal frame field $\{E_3=\vec H/H,E_4,\ldots,E_{n+1}\}$ in 
the normal bundle, and denote $A_{\alpha}=A_{E_{\alpha}}$.

From the definition \eqref{s} of $S$, we have, after a straightforward computation,
$$
\det A_3=H^2-\frac{1}{2}|S|^2-\frac{c^2}{16H^2}|T|^4-\frac{c}{2H}\langle ST,T\rangle,
$$
and then, using the Gauss equation \eqref{Gauss} and the expression \eqref{eq:barR} of the curvature tensor $\bar R$ of $\mathbb{M}^n(c)\times\mathbb{R}$, the Gaussian curvature $K_{\Sigma}$ of our surface can be written as
$$
K_{\Sigma}=c(1-|T|^2)+H^2-\frac{1}{2}|S|^2-\frac{c^2}{16H^2}|T|^4-\frac{c}{2H}\langle ST,T\rangle+\sum_{\alpha>3}\det A_{\alpha}.
$$

Next, from Theorem \ref{thm:delta}, we obtain
\begin{align}\label{deltaS}
\frac{1}{2}\Delta|S|^2&=
|\nabla S|^2+2\Big(c(1-|T|^2)+H^2-\frac{1}{2}|S|^2-\frac{c^2}{16H^2}|T|^4-\frac{c}{2H}\langle ST,T\rangle\\\nonumber
&+\sum_{\alpha>3}\det A_{\alpha}\Big)|S|^2.
\end{align}

Since $A_{\alpha}$ is traceless for any $\alpha>3$, we have 
$$
-2\sum_{\alpha>3}\det A_{\alpha}=|\sigma|^2-\frac{1}{H^2}|A_{\vec H}|^2\leq \mu.
$$

We note that $|\nabla|S||\leq|\nabla S|$ and, since $S$ is traceless, $|ST|=\dfrac{1}{\sqrt{2}}|T||S|$. 
Then, from \eqref{deltaS}, also using that the Schwarz inequality implies $|\langle ST,T\rangle|\leq|T||ST|$, it is easy to see that
\begin{align*}
\De|S|&\geq -|S|^3 + |S|\left(2c(1-|T|^2) + 2H^2 - \dfrac{c^2}{8H^2} -\mu \right) - \dfrac{|c|}{\sqrt{2}H}|S|^2 \\
&\geq -|S|^3 + |S|\left(2\min\{c,0\} + 2H^2 - \dfrac{c^2}{8H^2}-\mu \right)- \dfrac{|c|}{\sqrt{2}H}|S|^2\\
&\geq -\left(1+\dfrac{|c|}{2\sqrt{2}H}\right)|S|^3 - \left( \dfrac{|c|}{2\sqrt{2}H} - 2H^2 + \dfrac{c^2}{8H^2}-2\min\{c,0\} +{\mu}\right)|S|,
\end{align*}
that completes the proof.
\end{proof}

We end this section by recalling that a pmc surface $\Sigma$ in $\mathbb{M}^n(c)\times\mathbb{R}$ satisfies 
a Sobolev inequality of the form
\begin{equation}\label{sobolev}
\forall f\in C_0^{\infty}(\Sigma),\quad ||f||_2\leq A_{\Sigma}||\nabla f||_1+B_{\Sigma}||f||_1,
\end{equation}
where $||f||_p=(\int_{\Sigma}|f|^p d\vol)^{1/p}$ is the $L^p$-norm of the function $f$ and $A_{\Sigma}$ and $B_{\Sigma}$ are constants that depends only on the mean curvature $H$ of the surface (see \cite{hs}).

\begin{remark}
If $c>0$, we cannot apply the Sobolev inequality of \cite{hs} directly, because 
in this case we have restrictions on the support of the test functions. 
However, it holds (with new constants)  for all smooth functions on  $\Sigma$ with compact support. 
In fact, the second fundamental form of the standard isometric immersion
$i: \mathbb{M}^n(c)\times\mathbb{R} \to \mathbb{R}^{n+2}$ is given by
$$\beta(X,Y)= -\dfrac{1}{\sqrt{c}}(\langle X, Y\rangle -\langle X, \xi\rangle \langle Y,\xi\rangle)\eta,$$
where $\eta$ is chosen as the inward unit normal vector field.

Now, consider the isometric immersions $x: \Sigma \to \mathbb{M}^n(c)\times\mathbb{R}$ and $i\circ x: \Sigma \to\mathbb{R}^{n+2}$. 
A straightforward computation shows that the second fundamental form of the isometric immersion $i\circ x$ is given by
$$\alpha = \sigma +\beta,$$
where $\sigma $ is a second fundamental form of the isometric immersion $x$.

Therefore, the norm of the mean curvature of $\Sigma$ viewed as a surface in $\mathbb{R}^{n+2}$ is bounded above by the norm of the mean curvature of $\Sigma$ in $\mathbb{M}^n(c)\times\mathbb{R}$, which is a constant, plus a positive constant. Thus we can use 
Theorem 2.1 in \cite{hs} to show that the Sobolev inequality (\ref{sobolev}) holds for all $f\in C_0^{\infty}(\Sigma)$.
\end{remark}

\section{The main results and applications}

Let $\Sigma$ be an immersed surface in $\mathbb{M}^n(c)\times\mathbb{R}$ and $x_0\in\Sigma$ be a fixed a point. Consider the Riemannian distance function $d(x_0,x)$ to $x_0$ on $\Sigma$ and the following open domains  
$$
B(R)=\{x\in\Sigma | d(x_0,x)<R\}\quad\textnormal{and}\quad E(R)=\{x\in\Sigma | d(x_0,x)>R\}.
$$

We can now state our main result.

\begin{theorem}\label{zero:pmc}
Let $\Sigma$ be a complete non-minimal pmc surface in $\mathbb{M}^n(c)\times\mathbb{R}$  such that 
\begin{equation}\label{eq:finite-curvature}
\int_\Sigma |S|^2 d\vol < +\infty.
\end{equation}
Assume that $\mu=\sup_{\Sigma}(|\sigma|^2-(1/H)^2|A_{\vec H}|^2)<\infty$. 
Then the function $u=|S|$ goes to zero uniformly at infinity. More precisely, there exist positive constants $C_0$ and $C_1$  depending on $c$, $H$ and $\mu$,  and a positive radius $R_\Sigma$ determined by the condition $C_1\displaystyle\int_{E(R_\Sigma)}u^2 d\vol\leq 1$ such that, for all $R\geq  R_\Sigma,$
$$
||u||_{\infty, E(2R)}  \leq C_0 \int_\Sigma u^2 d\vol.
$$
Moreover, there exist some positive constants $D_0$ and $E_0$ depending on $c$, $H$, and $\mu$ such that the inequality
$\displaystyle\int_\Sigma u^2 d\vol \leq D_0$ implies
$$
||u||_{\infty} \leq E_0 \int_\Sigma u^2 d\vol.
$$
\end{theorem}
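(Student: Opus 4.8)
The plan is to prove a Moser-iteration-type estimate starting from the differential inequality established in Corollary \ref{simons2}. Recall that for $u = |S|$ we have $-\Delta u \leq au^3 + bu$ with constants $a, b$ depending only on $c$, $H$, and $\mu$. The strategy is to combine this inequality with the Sobolev inequality \eqref{sobolev} to bootstrap from an $L^2$ bound on $u$ to an $L^\infty$ bound, with the key point being that the cubic nonlinearity $au^3$ can be controlled provided the $L^2$-norm of $u$ over the relevant region is small enough.

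First I would set up the iteration on the exterior regions $E(R)$. The idea is to multiply the inequality $-\Delta u \leq au^3 + bu$ by $u^{2p-1}\eta^2$ for a cutoff function $\eta$ and a sequence of exponents $p = p_k \to \infty$, then integrate by parts. This produces a Caccioppoli-type estimate of the form $\int |\nabla(u^p\eta)|^2 \lesssim \int u^{2p}(\text{lower order}) + \int a\, u^{2p+2}\eta^2$. The crucial term is the one coming from $au^3$, namely $\int a\, u^{2p+2}\eta^2$. To handle it I would use H\"older's inequality to split off a factor of $(\int_{E(R)} u^2)$, so that this term is dominated by $a(\int_{E(R)} u^2)^{?}$ times a higher $L^q$-norm of $u^p\eta$; the Sobolev inequality \eqref{sobolev} then converts the gradient term on the left into that higher norm. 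The hypothesis $C_1\int_{E(R_\Sigma)} u^2 \le 1$ is precisely what guarantees that the bad term can be absorbed into the left-hand side, closing each step of the iteration. Choosing nested cutoffs supported on $E(2^{-k}\cdots)$ or on annuli and iterating $p_k$ geometrically then yields, after summing a convergent series of constants, the bound $\|u\|_{\infty, E(2R)} \le C_0 \int_\Sigma u^2\, d\vol$ for $R \ge R_\Sigma$.

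For the second, global, statement the approach is parallel but simpler: since $\sigma$ is bounded, $u = |S|$ is bounded and in particular lies in every $L^q$; one runs the same Moser iteration with $\eta$ a cutoff that is identically $1$ on a large ball and using the global Sobolev inequality. Here the smallness hypothesis $\int_\Sigma u^2 \le D_0$ plays the role that $C_1\int_{E(R_\Sigma)} u^2 \le 1$ played before, ensuring the cubic term is absorbed at every step, and yields $\|u\|_\infty \le E_0 \int_\Sigma u^2\, d\vol$. The boundedness of $\sigma$ (equivalently of $u$) is what makes all the integrals in the iteration finite to begin with and justifies the integration by parts.

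The main obstacle I expect is the bookkeeping in the iteration: one must track how the constants from the Sobolev inequality and from H\"older's inequality accumulate as $p_k$ grows, and verify that the resulting infinite product (or sum of logarithms) converges so that the final constants $C_0, E_0$ are finite and depend only on $c, H, \mu$. A subtler point is the interplay between the exponents: the nonlinearity is cubic rather than linear, so at each stage one gains a fixed Sobolev-improvement in integrability while simultaneously the smallness of $\int u^2$ must offset the growth coming from the $u^{2p+2}$ term. Getting the threshold radius $R_\Sigma$ and the constant $C_1$ exactly right — so that the absorption step is valid uniformly in $p$ — is where the argument must be handled carefully rather than routinely.
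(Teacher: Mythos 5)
Your proposal is correct and follows essentially the same route as the paper: the paper's proof simply invokes Corollary \ref{simons2} together with the Sobolev inequality \eqref{sobolev} and then refers to the Moser-iteration argument of \cite[Theorem 4.1]{bds}, which is precisely the iteration scheme (cutoffs on exterior domains, H\"older splitting of the cubic term, absorption via the smallness condition $C_1\int_{E(R_\Sigma)}u^2\,d\vol\leq 1$, geometric growth of exponents) that you outline. The only detail worth noting is that \eqref{sobolev} is an $L^1$-type inequality, so one must first convert it (applying it to powers of $u$ times the cutoff) into the $L^2$-Sobolev form used in the iteration, exactly as is done in the cited reference.
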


\begin{proof} Since the Sobolev inequality \eqref{sobolev} is valid to $\Sigma$ and the function $u$ satifies the inequality in Corollary \ref{simons2}, we can work as in the proof of \cite[Theorem 4.1]{bds} to conclude.
\end{proof}

When $n=2$, we have $\mu=0$ and therefore we have following corollary.

\begin{corollary}\label{zero}
Let $\Sigma$ be a complete non-minimal cmc surface in $\mathbb{M}^2(c)\times\mathbb{R}$ with 
$$
\int_\Sigma |S|^2 d\vol < +\infty.
$$ 
Then the function $u=|S|$ goes to zero uniformly at infinity.
More precisely, there exist positive constants $C_0$ and $C_1$  depending on $c$ and $H$, and a positive radius $R_\Sigma$ determined by the condition $C_1\displaystyle\int_{E(R_\Sigma)}u^2 d\vol\leq 1$ such that, for all $R\geq  R_\Sigma,$
$$
||u||_{\infty, E(2R)}  \leq C_0 \int_\Sigma u^2 d\vol.
$$
Moreover, there exist some positive constants $D_0$ and $E_0$ depending on $c$ and $H$ such that the inequality
$\displaystyle\int_\Sigma u^2 d\vol \leq D_0$ implies
$$
||u||_{\infty} \leq E_0 \int_\Sigma u^2 d\vol.
$$
\end{corollary}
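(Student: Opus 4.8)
The plan is to obtain the corollary as the codimension-one specialization of Theorem \ref{zero:pmc}, i.e.\ the case $n=2$. The only hypothesis of that theorem not explicitly assumed here is the boundedness of $|\sigma|$, so the substance of the argument is to recover it from the finite total curvature condition, after which the theorem applies with its constants simplified. First I would record the algebraic collapse that occurs when $n=2$: the normal bundle of a surface in $\mathbb{M}^2(c)\times\mathbb{R}$ is one-dimensional, so the frame $\{E_3=\vec H/H\}$ already spans it and there are no indices $\alpha>3$ in the computation of Corollary \ref{simons2}. Hence $|\sigma|^2=(1/H)^2|A_{\vec H}|^2$ identically, so that $\mu=\sup_{\Sigma}(|\sigma|^2-(1/H)^2|A_{\vec H}|^2)=0$. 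In particular the constants $a,b$ in the differential inequality $-\Delta u\le au^3+bu$ of Corollary \ref{simons2} now depend only on $c$ and $H$, and this inequality holds without any a priori boundedness assumption on $\sigma$; the role of that hypothesis in the general theorem was precisely to guarantee $\mu<+\infty$, which is now automatic.

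Next I would reduce the boundedness of $|\sigma|$ to a pointwise bound on $u=|S|$ by means of the identity of Remark \ref{remark1},
$$
|\sigma|^2=|S|^2+\frac{c}{H}\langle ST,T\rangle+\frac{c^2|T|^4}{8H^2}+2H^2.
$$
Since $|T|\le 1$ and, as computed in the proof of Corollary \ref{simons2}, $|ST|=\frac{1}{\sqrt{2}}|T||S|$ with $|\langle ST,T\rangle|\le|T||ST|$, every term on the right-hand side other than $|S|^2$ is controlled by a constant depending only on $c$ and $H$. Thus $|\sigma|$ is bounded as soon as $u$ is, and it suffices to establish a pointwise bound on $u$.

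With the cubic differential inequality and the Sobolev inequality \eqref{sobolev} in hand—both now with constants depending only on $c$ and $H$—I would run the Moser iteration exactly as in the proof of \cite[Theorem 4.1]{bds}, i.e.\ as in the proof of Theorem \ref{zero:pmc}. Smoothness of $u$ gives its boundedness on any compact ball $B(R)$, while on the complementary end $E(R)$ the finiteness of $\int_\Sigma u^2\,d\vol$ makes the tail $\int_{E(R)}u^2\,d\vol$ as small as desired for $R$ large, which is exactly the smallness needed to start and close the iteration. This simultaneously produces the global bound on $u$ (hence on $|\sigma|$, via the identity above) and the explicit estimate $\|u\|_{\infty,E(2R)}\le C_0\int_\Sigma u^2\,d\vol$, together with the global bound $\|u\|_\infty\le E_0\int_\Sigma u^2\,d\vol$ under the smallness assumption on $\int_\Sigma u^2\,d\vol$, all with constants depending only on $c$ and $H$.

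The main obstacle is this iteration step: passing from the integral bound $\int_\Sigma u^2<+\infty$ to a pointwise bound on $u$ is not formal and is where the interaction between the cubic inequality and the Sobolev inequality is essential. The delicate points are verifying that the iteration constants remain controlled through the infinitely many steps and that the tail smallness of $\int_{E(R)}u^2\,d\vol$ genuinely suffices to close the iteration; once these are settled, the passage to the statement of the corollary, with the simplified dependence of the constants on $c$ and $H$ alone, is immediate.
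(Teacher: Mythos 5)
Your proposal is correct and follows essentially the same route as the paper, which obtains the corollary by specializing Theorem \ref{zero:pmc} to $n=2$ via Remark \ref{remark1}: the one-dimensional normal bundle forces $\mu=0$, the identity $|\sigma|^2=|S|^2+\frac{c}{H}\langle ST,T\rangle+\frac{c^2|T|^4}{8H^2}+2H^2$ controls $|\sigma|$ by $|S|$, and the Moser iteration of \cite[Theorem 4.1]{bds} runs with constants depending only on $c$ and $H$. Your explicit observation that the a priori boundedness of $|\sigma|$ in Theorem \ref{zero:pmc} served only to guarantee $\mu<+\infty$ (automatic here) is exactly the point the paper leaves implicit, and it cleanly removes any apparent circularity in invoking the theorem.
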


In the following we will present some applications of Theorem \ref{zero:pmc}. 
Firstly, we will prove two compactness results for pmc surfaces of finite total curvature.

\begin{theorem}\label{theo-compact0}
Let $\Sigma$ be a complete non-minimal pmc surface in $\mathbb{M}^n(c)\times\real$ with mean curvature vector field $\vec H$ and such that the norm of its second fundamental form $\sigma$ is bounded and 
$$
\int_\Sigma |S|^2d\vol < +\infty.
$$
Then we have
\begin{enumerate}
\item If $c>0$ and $H^2>(\mu+\sqrt{\mu^2+c^2})/4$, then $\Sigma$ is compact$;$

\item If $c<0$ and $H^2>(\mu-2c+\sqrt{\mu^2-4c\mu+5c^2})/4$, then $\Sigma$ is compact,
\end{enumerate}
where $\mu=\sup_\Sigma(|\sigma|^2-(1/H^2)|A_{\vec H}|^2)$. 
\end{theorem}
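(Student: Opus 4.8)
The plan is to derive a uniform lower bound for the function $|S|^2+(c/H)\langle ST,T\rangle$ that appears naturally in the expression for the Gaussian curvature computed in Corollary \ref{simons2}, and then combine it with the conclusion of Theorem \ref{zero:pmc} to force compactness. Recall from the proof of Corollary \ref{simons2} that
\begin{equation*}
K_\Sigma=c(1-|T|^2)+H^2-\frac{1}{2}|S|^2-\frac{c^2}{16H^2}|T|^4-\frac{c}{2H}\langle ST,T\rangle+\sum_{\alpha>3}\det A_\alpha,
\end{equation*}
and that $-2\sum_{\alpha>3}\det A_\alpha=|\sigma|^2-(1/H^2)|A_{\vec H}|^2\leq\mu$. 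First I would assemble these facts, together with the bounds $|ST|=(1/\sqrt2)|T||S|$ and $|\langle ST,T\rangle|\leq|T||ST|\leq(1/\sqrt2)|S|$ (using $|T|\leq1$), into a single inequality for $K_\Sigma$ in terms of $|S|$, $|T|$, and the structural constants $c$, $H$, $\mu$.

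The heart of the argument is the following: by Theorem \ref{zero:pmc}, under the stated hypotheses $u=|S|$ goes to zero uniformly at infinity, so on the complement of a large ball the terms involving $|S|$ become negligible. The strategy is then to show that, outside a compact set, $K_\Sigma$ is bounded below by a strictly positive constant, which is exactly where the two curvature conditions in cases (1) and (2) enter. When $|S|=0$ the Gaussian curvature reduces to $c(1-|T|^2)+H^2-(c^2/16H^2)|T|^4$, and one optimizes this expression over $|T|\in[0,1]$ subject to the sign of $c$. For $c>0$ the minimum is attained at $|T|=1$ (or at a critical point), yielding the threshold $H^2>(\mu+\sqrt{\mu^2+c^2})/4$; for $c<0$ the geometry of the quartic in $|T|^2$ produces the threshold $H^2>(\mu-2c+\sqrt{\mu^2-4c\mu+5c^2})/4$. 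I would carry out this elementary one-variable optimization carefully for each sign of $c$, absorbing the $\mu$-term and the $|S|$-dependent error terms, to obtain that $K_\Sigma\geq\delta>0$ on $E(R)$ for $R$ large.

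With a positive lower bound $K_\Sigma\geq\delta>0$ outside a compact set in hand, the conclusion follows from a Bonnet–Myers type argument: a complete surface whose Gaussian curvature is bounded below by a positive constant outside a compact set has finite diameter, hence is compact. More precisely, since $\Sigma$ is complete and $K_\Sigma\geq\delta$ on $E(R)$, any geodesic ray would have to develop a conjugate point within bounded arclength, contradicting completeness unless $\Sigma$ is bounded; thus $\Sigma$ is compact.

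The main obstacle I anticipate is the optimization step that produces the precise algebraic thresholds. The quantity $c(1-|T|^2)+H^2-(c^2/16H^2)|T|^4$, once corrected by the $|S|$-error terms and the $\mu$-term, must be shown to stay positive, and the exact constants $(\mu+\sqrt{\mu^2+c^2})/4$ and $(\mu-2c+\sqrt{\mu^2-4c\mu+5c^2})/4$ come out of solving a quadratic in $H^2$ after minimizing over $|T|^2\in[0,1]$; handling the boundary cases $|T|=0,1$ versus interior critical points, with the correct sign of $c$ dictating where the minimum lies, is the delicate part. A secondary technical point is making rigorous the passage from "$|S|\to0$ uniformly at infinity" to "the $|S|$-terms are uniformly small on $E(R)$," but this is furnished directly by the quantitative estimate $\|u\|_{\infty,E(2R)}\leq C_0\int_\Sigma u^2\,d\vol$ in Theorem \ref{zero:pmc}.
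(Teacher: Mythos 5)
Your proposal is correct, and its first half coincides with the paper's argument: the paper derives the same expression for $K_\Sigma$, bounds $-2\sum_{\alpha>3}\det A_\alpha\leq\mu$ and $|\langle ST,T\rangle|\leq(1/\sqrt{2})|T|^2|S|$, and then uses Theorem \ref{zero:pmc} to make the $|S|$-terms negligible outside a compact set. One small clarification on the thresholds: no genuine optimization over $|T|$ is needed (nor would it reproduce exactly these constants); the paper simply uses the crude bounds $c(1-|T|^2)\geq\min\{c,0\}$ and $|T|^4\leq 1$ simultaneously, and the constants in (1) and (2) come from solving the resulting quadratic inequality $16H^4-8\bigl(\mu-2\min\{c,0\}\bigr)H^2-c^2>0$ for $H^2$. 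Where you genuinely diverge is the endgame. Having shown $K_\Sigma\geq k>0$ outside a compact set $\Omega$, the paper observes that $K_\Sigma^-$ has compact support, invokes Huber's theorem (via \cite[Theorem 1]{w}) to get $\int_\Sigma K_\Sigma^+ d\vol<+\infty$, deduces $\Vol(\Sigma\setminus\Omega)\leq(1/k)\int_\Sigma K_\Sigma^+d\vol<+\infty$, and concludes using the fact that a complete non-compact surface of this type has infinite volume (\cite{f}); note that this last fact is false for abstract surfaces and rests on the cmc/pmc structure (e.g.\ the Sobolev inequality \eqref{sobolev}). You instead run a ray/Bonnet--Myers argument: a complete non-compact surface carries a geodesic ray, whose tail lies in $E(R)$ where $K_\Sigma\geq\delta>0$, and the second-variation (Bonnet) estimate shows a geodesic in that region cannot minimize past length $\pi/\sqrt{\delta}$, contradicting the minimizing property of the ray (this, not ``completeness,'' is the correct source of the contradiction). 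Your route is valid and more self-contained, avoiding both Huber's theorem and the infinite-volume input; the paper's route, in exchange, yields the extra information that $\Sigma$ has finite total Gaussian curvature $\int_\Sigma|K_\Sigma|\,d\vol<+\infty$, in keeping with the theme of the paper.
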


\begin{proof} As we have seen in the proof of Corollary \ref{simons2}, the Gaussian curvature $K_{\Sigma}$ of $\Sigma$ can be written as
$$
K_{\Sigma}=c(1-|T|^2)+H^2-\frac{1}{2}|S|^2-\frac{c^2}{16H^2}|T|^4-\frac{c}{2H}\langle ST,T\rangle+\sum_{\alpha>3}\det A_{\alpha},
$$ 
where $A_{\alpha}=A_{E_{\alpha}}$, $\{E_3=\vec H/H,E_4,\ldots,E_{n+1}\}$ being a local orthonormal frame field in the normal bundle, and then
\begin{equation}\label{eq:ineqK}
K_{\Sigma}\geq c(1-|T|^2)+H^2-\frac{1}{2}|S|^2-\frac{c^2}{16H^2}-\frac{|c|}{2\sqrt{2}H}|S|+\sum_{\alpha>3}\det A_{\alpha},
\end{equation}
since $|\langle ST,T\rangle|\leq |T||ST|$ and $|ST|=(1/\sqrt{2})|T||S|$.

Next, if $c>0$, since $2\sum_{\alpha>3}\det A_{\alpha}=-(|\sigma|^2-(1/H^2)|A_{\vec H}|^2)\geq-\mu$, we get
$$
K_\Sigma\geq -\frac{1}{2}|S|^2-\frac{c}{2\sqrt{2}H}|S|+H^2-\frac{c^2}{16H^2}-\frac{1}{2}\mu.
$$

When $c<0$, from \eqref{eq:ineqK}, we have
$$
K_\Sigma \geq c-\frac{1}{2}|S|^2-\frac{c}{2\sqrt{2}H}|S|+H^2-\frac{c^2}{16H^2}-\frac{1}{2}\mu.
$$

In both cases, the hypotheses and Theorem \ref{zero:pmc} imply that the negative part $K_\Sigma^-$ of $K_\Sigma$ has compact support and, therefore, satisfies
$$
\int_\Sigma K_\Sigma^-d\vol <+\infty.
$$
It follows, from Huber's Theorem (see \cite[Theorem 1]{w}), that the 
positive part $K_\Sigma^+$ of $K_\Sigma$ also satisfies 
$$
\int_\Sigma K_\Sigma^+d\vol <+\infty.
$$

Next, outside a compact set $\Omega$ we have $K_\Sigma^+\geq k/2>0$, where
$$
k=\begin{cases}
\dfrac{16H^4-8(\mu-2c)H^2-c^2}{16H^2},\quad\textnormal{if}\quad c<0\\
\dfrac{16H^4-8\mu H^2-c^2}{16H^2},\quad\textnormal{if}\quad c>0,
\end{cases}
$$
and then $\Vol(\Sigma\backslash\Omega)<+\infty$. Since the volume of a complete non-compact surface with bounded mean curvture is infinite (see \cite{f}), it follows that our surface $\Sigma$ is compact.
\end{proof}

When $n=2$ and $\mu=0$, we obtain the following result.

\begin{corollary}\label{theo-compact1}
Let $\Sigma$ be a complete cmc surface in $\mathbb{M}^2(c)\times\real$ such that 
$$
\int_\Sigma |S|^2 d\vol < +\infty.
$$
Then we have
\begin{enumerate}
\item If $c>0$ and $H>\sqrt{c}/2$, then $\Sigma$ is compact$;$

\item If $c<0$ and $H>(\sqrt{\sqrt{5}+2}/2)\sqrt{-c}$, then $\Sigma$ is compact.
\end{enumerate}
\end{corollary}

To prove our next result, we will need the following theorem in \cite{FR2}.

\begin{theorem}[\cite{FR2}]\label{deltaT} Let $\Sigma$ be a pmc surface in $\mathbb{M}^n(c)\times\mathbb{R}$. Then we have
$$
\frac{1}{2}\Delta|T|^2=|A_N|^2+c|T|^2(1-|T|^2)-\sum_{\alpha=3}^{n+1}|A_{\alpha}T|^2,
$$
where $N$ is the normal part of the vector field $\xi$, $\{E_{3},\ldots,E_{n+1}\}$ is a local orthonormal frame field in the normal bundle and $A_{\alpha}=A_{E_{\alpha}}$.
\end{theorem}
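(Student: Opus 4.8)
The plan is to exploit the single structural fact that $\xi$ is parallel in $\bar M$. Writing $\xi = T + N$, where $T$ is the component of $\xi$ tangent to $\Sigma$ and $N$ its normal component, I would first split $\bar\nabla_X\xi = 0$ into its tangent and normal parts using the Gauss and Weingarten formulas. This produces the two structure equations
$$
\nabla_X T = A_N X, \qquad \nabla^{\perp}_X N = -\sigma(X, T),
$$
which, together with the pointwise relation $|N|^2 = 1 - |T|^2$ coming from $|\xi|^2 = 1$, are the only inputs from parallelism that the computation requires.

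Next I would apply the Bochner-type splitting of the Laplacian of a squared norm. Fixing a local orthonormal frame $\{e_1, e_2\}$ on $\Sigma$ that is geodesic at the point under consideration,
$$
\frac{1}{2}\Delta|T|^2 = \sum_i |\nabla_{e_i} T|^2 + \sum_i \langle \nabla_{e_i}\nabla_{e_i}T, T\rangle.
$$
The first sum is immediate from the first structure equation: $\sum_i|\nabla_{e_i}T|^2 = \sum_i|A_N e_i|^2 = |A_N|^2$, which already yields the leading term of the statement.

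The heart of the argument is the second sum. Writing $\langle\nabla_{e_i}\nabla_{e_i}T, T\rangle = e_i\langle A_N e_i, T\rangle - |A_N e_i|^2$ at the geodesic point and using $\langle A_N e_i, T\rangle = \langle\sigma(e_i,T), N\rangle$, I would differentiate and invoke the second structure equation to replace $\nabla^{\perp}_{e_i}N$ by $-\sigma(e_i,T)$. This generates the term $-\sum_i|\sigma(e_i,T)|^2 = -\sum_{\alpha}|A_{\alpha} T|^2$, which is the last term of the formula, while the spurious $|A_N|^2$ contributions arising here cancel against one another. What remains is $\sum_i\langle(\bar\nabla_{e_i}\sigma)(e_i, T), N\rangle$, involving the covariant derivative of the second fundamental form. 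Here I would use the Codazzi equation $(\bar\nabla_X\sigma)(Y,Z)-(\bar\nabla_Y\sigma)(X,Z)=(\bar R(X,Y)Z)^{\perp}$ with $X=Z=e_i$, $Y=T$, and trace over $i$. The decisive simplification is that the symmetrized contribution $\sum_i(\bar\nabla_T\sigma)(e_i,e_i) = 2\nabla^{\perp}_T\vec H$ vanishes exactly because $\Sigma$ is pmc; this is the only place the hypothesis $\nabla^{\perp}\vec H=0$ is used. The surviving piece is $\sum_i\langle\bar R(e_i,T)e_i, N\rangle$, which I would evaluate directly from the ambient curvature expression \eqref{eq:barR}. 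Using $\langle e_i, N\rangle = \langle T, N\rangle = 0$, $\langle\xi, N\rangle = |N|^2$ and $\langle e_i,\xi\rangle = \langle e_i, T\rangle$, a short contraction gives $\sum_i\langle\bar R(e_i,T)e_i, N\rangle = c|T|^2|N|^2 = c|T|^2(1-|T|^2)$, the middle term. Assembling the three contributions completes the proof.

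The main obstacle I anticipate is bookkeeping rather than anything conceptual: one must differentiate $A_N$ while the normal field $N$ itself varies, so the covariant derivative of the shape operator and the normal derivative $\nabla^{\perp}N$ have to be kept apart and recombined correctly, and the sign in the Codazzi equation must be matched to the curvature convention implicit in the Gauss equation \eqref{Gauss}. Getting these signs right is precisely what makes the extraneous $|A_N|^2$ cross-terms cancel and leaves exactly the three stated terms.
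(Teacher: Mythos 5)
Your proposal is correct: the structure equations $\nabla_XT=A_NX$ and $\nabla^{\perp}_XN=-\sigma(X,T)$, the Bochner splitting, the Codazzi equation with the trace term killed by $\nabla^{\perp}\vec H=0$, and the contraction of \eqref{eq:barR} giving $c|T|^2|N|^2$ all check out with the paper's sign conventions, and they assemble to exactly the stated identity. Note that the paper itself offers no proof of this theorem---it is quoted from the reference \cite{FR2}---and your computation is essentially the standard derivation given there, so there is nothing to flag beyond confirming that your bookkeeping (in particular the cancellation of the $|A_N|^2$ cross-terms, with one copy surviving) is right.
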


\begin{theorem}  Let $\Sigma$ be a complete non-minimal pmc surface in $\mathbb{M}^n(c)\times\real$, $c<0$, with mean curvature vector field $\vec H$ and such that the norm of its second fundamental form $\sigma$ is bounded and 
$$
\int_\Sigma(|S|^2+|N|^2)d\vol < +\infty.
$$
If $H^2>(\mu+\sqrt{\mu^2+c^2})/4$, where $\mu=\sup_\Sigma(|\sigma|^2-(1/H^2)|A_{\vec H}|^2)$, then $\Sigma$ is compact.
\end{theorem}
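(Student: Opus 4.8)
The plan is to run the $c<0$ branch of the proof of Theorem \ref{theo-compact0}, whose only defect is the Gaussian-curvature term $c(1-|T|^2)=c|N|^2$: for $c<0$ this term is negative and was there estimated crudely by the constant $c$, which is what forced the stronger condition $H^2>(\mu-2c+\sqrt{\mu^2-4c\mu+5c^2})/4$. The extra hypothesis $\int_\Sigma|N|^2\,d\vol<+\infty$ is meant precisely to upgrade this step: I would show that $|N|^2$ tends to zero uniformly at infinity, so that $c|N|^2$ becomes negligible there and the sharper inequality $H^2>(\mu+\sqrt{\mu^2+c^2})/4$ becomes sufficient, exactly as in the $c>0$ case of Theorem \ref{theo-compact0}.

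First I would produce a subsolution inequality for $v=|N|^2=1-|T|^2$. Applying Theorem \ref{deltaT} together with $\Delta|N|^2=-\Delta|T|^2$ gives
\[
-\frac12\Delta v=|A_N|^2+c|T|^2v-\sum_{\alpha}|A_\alpha T|^2 .
\]
Since $c<0$ the middle term is nonpositive and the last sum is nonnegative, while the elementary pointwise bound $|A_N|^2\le|\sigma|^2|N|^2$ (immediate from $\langle A_VX,Y\rangle=\langle\sigma(X,Y),V\rangle$ and Cauchy--Schwarz) combined with the assumed boundedness of $|\sigma|$ yields
\[
-\Delta v\le C\,v,\qquad C=2\sup_\Sigma|\sigma|^2<+\infty .
\]
This is a linear subsolution inequality of the type appearing in Corollary \ref{simons2}, now with vanishing cubic coefficient, and $v\ge0$.

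Next I would deduce that $v=|N|^2$ goes to zero uniformly at infinity. Because $|N|\le|\xi|=1$ we have $v\le1$, hence $\int_\Sigma v^2\,d\vol\le\int_\Sigma|N|^2\,d\vol<+\infty$. Thus $v$ enjoys the three structural ingredients underlying Theorem \ref{zero:pmc}: a subsolution inequality, a finite $L^2$ norm, and the Sobolev inequality \eqref{sobolev}. Carrying out the iteration of \cite[Theorem 4.1]{bds} with $v$ in place of $u=|S|$ then yields the uniform decay of $v$ at infinity. With this in hand, the Gaussian-curvature estimate of Theorem \ref{theo-compact0} reads
\[
K_\Sigma\ge c|N|^2+H^2-\frac12|S|^2-\frac{c^2}{16H^2}-\frac{|c|}{2\sqrt2\,H}|S|-\frac\mu2 ,
\]
and since $|S|\to0$ by Theorem \ref{zero:pmc} and $|N|^2\to0$ uniformly, outside a large compact set the right-hand side is at least $H^2-\frac{c^2}{16H^2}-\frac\mu2=\frac{16H^4-8\mu H^2-c^2}{16H^2}$, which is strictly positive precisely when $H^2>(\mu+\sqrt{\mu^2+c^2})/4$. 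Hence $K_\Sigma\ge k>0$ off a compact set; then $K_\Sigma^-$ has compact support, Huber's theorem forces $\int_\Sigma K_\Sigma^+\,d\vol<+\infty$, and $K_\Sigma^+\ge k$ off a compact set bounds $\Vol(\Sigma\setminus\Omega)$, contradicting the infinite volume of a complete non-compact surface unless $\Sigma$ is compact.

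The main obstacle is the first step. One must verify that Theorem \ref{deltaT}, combined with the sign $c<0$ and the boundedness of $\sigma$, genuinely produces a clean inequality $-\Delta|N|^2\le C|N|^2$ with $C$ independent of the point, so that the iteration scheme of \cite{bds} applies to $v$; once the uniform decay of $|N|^2$ is secured, the remainder is a direct transcription of the arguments already used for Theorem \ref{theo-compact0} and Theorem \ref{zero:pmc}.
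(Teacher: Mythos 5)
Your proposal is correct and follows the same overall strategy as the paper's proof: extract a subsolution inequality for (a power of) $|N|$ from Theorem \ref{deltaT}, run the iteration of \cite[Theorem 4.1]{bds} using the Sobolev inequality \eqref{sobolev} and the finite $L^2$ norm to obtain uniform decay of $|N|$ at infinity, then combine this with the decay of $|S|$ from Theorem \ref{zero:pmc} in the curvature estimate \eqref{eq:ineqK}, and conclude via Huber's theorem and the infinite volume of complete non-compact surfaces. The one point where you diverge is the choice of test function: the paper works with $w=|N|$, deriving the identity $|N|^2|\nabla|N||^2=|A_NT|^2$ from $\nabla^{\perp}_XN=-\sigma(X,T)$ and using the Schwarz bound $\sum_\alpha|A_\alpha T|^2|N|^2\geq|A_NT|^2$ to reach $-\Delta|N|\leq -c|N|^3+d|N|$, whereas you work with $v=|N|^2$ and simply discard the two nonpositive terms $c|T|^2v$ and $-\sum_\alpha|A_\alpha T|^2$ (using $c<0$), together with $|A_N|^2\leq|\sigma|^2|N|^2$, to get the purely linear inequality $-\Delta v\leq 2\bigl(\sup_\Sigma|\sigma|^2\bigr)v$. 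Your variant is in fact slightly cleaner: $v$ is smooth everywhere and your bound $\int_\Sigma v^2\,d\vol\leq\int_\Sigma|N|^2\,d\vol<+\infty$ (valid since $v\leq 1$) is exactly what the iteration needs, while the paper's route requires dividing by $|N|^3$, which is delicate at zeros of $N$; both deliver the same uniform decay, and from that point on your argument coincides with the paper's.
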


\begin{proof} From Theorem \ref{deltaT}, as $|N|^2=1-|T|^2$, we have
\begin{equation}\label{deltaN}
-\frac{1}{2}\Delta|N|^2=|A_N|^2+c|N|^2(1-|N|^2)-\sum_{\alpha=3}^{n+1}|A_{\alpha}T|^2.
\end{equation}

Next, since $\nabla\xi=0$ implies $\nabla^{\perp}_XN=-\sigma(X,T)$, one obtains
$$
2|N|X(|N|)=X(|N|^2)=2\langle\nabla^{\perp}_XN,N\rangle=-2\langle A_NT,X\rangle,
$$
and then
\begin{equation}\label{nablaN}
|N|^2|\nabla|N||^2=|A_NT|^2.
\end{equation}

Replacing \eqref{nablaN} in \eqref{deltaN}, we get that
$$
-|N|^3\Delta|N|=(|A_N|^2+c|N|^2(1-|N|^2))|N|^2-\sum_{\alpha=3}^{n+1}|A_{\alpha}T|^2|N|^2+|A_NT|^2,
$$
which gives
$$
-|N|^3\Delta|N|\leq(|\sigma|^2+c(1-|N|^2))|N|^4,
$$
since, using the Schwarz inequality, we can see that $\sum_{\alpha=3}^{n+1}|A_{\alpha}T|^2|N|^2\geq|A_NT|^2$. It follows that there exists a constant $d$ such that 
$$
-\Delta|N|\leq -c|N|^3+d|N|.
$$ 

Since the function $w=|N|$ also satisfies the Sobolev inequality \eqref{sobolev} and, by hypothesis, 
$$
\int_{\Sigma}w^2d\vol\leq+\infty,
$$
we can again work as in the proof of \cite[Theorem 4.1]{bds} to prove that also $w$ goes to zero uniformly at infinity.

Now, from \eqref{eq:ineqK}, we get
$$
K_{\Sigma}\geq c|N|^2+H^2-\frac{1}{2}|S|^2-\frac{c^2}{16H^2}-\frac{|c|}{2\sqrt{2}H}|S|-\frac{1}{2}\mu,
$$
which, together with Theorem \ref{zero:pmc}, shows that the superior limit of $K_{\Sigma}$ is positive. This means that we can use the same arguments as in the proof of Theorem \ref{theo-compact0} to conclude.
\end{proof}

\begin{corollary}\label{theo-compact2}  Let $\Sigma$ be a complete non-minimal cmc surface in $\mathbb{M}^2(c)\times\real$, $c<0$, such that
$$
H>\frac{\sqrt{-c}}{2}\quad\mbox{and}\quad\int_\Sigma (|S|^2+|N|^2)d\vol<+\infty,
$$
Then $\Sigma$ is compact.
\end{corollary}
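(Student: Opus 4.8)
The plan is to obtain this corollary as the codimension-reduced ($n=2$) special case of the preceding theorem, the point being that in that case the defect quantity $\mu$ vanishes and the numerical threshold collapses to exactly $H>\sqrt{-c}/2$. Thus the whole argument is essentially the substitution $\mu=0$, once one checks the boundedness hypothesis on $\sigma$.

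First I would record that for $n=2$ one has $\mu=\sup_\Sigma(|\sigma|^2-(1/H^2)|A_{\vec H}|^2)=0$, as in Remark \ref{remark1}: the normal bundle is one-dimensional, so $A_{\vec H}=HA$ for the scalar shape operator $A$, and $|\sigma|^2=|A|^2=(1/H^2)|A_{\vec H}|^2$. Feeding $\mu=0$ into the hypothesis $H^2>(\mu+\sqrt{\mu^2+c^2})/4$ of the preceding theorem and using $c<0$ (so that $\sqrt{c^2}=-c$) gives $H^2>-c/4$, i.e. $H>\sqrt{-c}/2$, which is precisely the hypothesis assumed here.

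Second I would verify the one hypothesis of the preceding theorem that is not literally restated in the corollary, namely that $|\sigma|$ is bounded. This is where Remark \ref{remark1} does the real work: since $\int_\Sigma(|S|^2+|N|^2)\,d\vol<+\infty$ in particular gives $\int_\Sigma|S|^2\,d\vol<+\infty$, Corollary \ref{zero} applies and $u=|S|$ goes to zero uniformly at infinity, hence $|S|$ is bounded on all of $\Sigma$. Inserting this into the identity $|\sigma|^2=|S|^2+(c/H)\langle ST,T\rangle+c^2|T|^4/(8H^2)+2H^2$ of Remark \ref{remark1}, together with the pointwise bounds $|T|\leq 1$ and $|\langle ST,T\rangle|\leq(1/\sqrt{2})|T|^2|S|$, shows that $|\sigma|$ is bounded.

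With $\mu=0$, $c<0$, $|\sigma|$ bounded, $\int_\Sigma(|S|^2+|N|^2)\,d\vol<+\infty$, and $H^2>(\mu+\sqrt{\mu^2+c^2})/4$ all in force, the preceding theorem applies verbatim and yields the compactness of $\Sigma$. I do not expect any serious obstacle, since the argument is a direct specialization; the only non-formal ingredient is the boundedness of $\sigma$, which is supplied by Corollary \ref{zero} combined with the explicit expression for $|\sigma|^2$ recorded in Remark \ref{remark1}.
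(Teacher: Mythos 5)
Your proposal is correct and is essentially the paper's own argument: the paper derives this corollary by invoking Remark \ref{remark1} (which gives $\mu=0$ and the boundedness of $|\sigma|$ in the $n=2$ case via Corollary \ref{zero} and the identity for $|\sigma|^2$) and then specializing the hypothesis $H^2>(\mu+\sqrt{\mu^2+c^2})/4$ of the preceding theorem to $\mu=0$, $c<0$, which is exactly $H>\sqrt{-c}/2$. Your write-up just makes explicit the steps the paper leaves implicit.
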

 
In the following, we will find a positive lower bound for the bottom of the essential spectrum of the Laplacian. 
In our theorem, $I_\Sigma$ denotes the index of the Jacobi operator. 

\begin{theorem}\label{thm_spec}
Let $\Sigma$ be a complete non-minimal cmc surface in $\mathbb{M}^2(c)\times\real$, with finite index $I_{\Sigma}$ and 
$$
\int_\Sigma |S|^2 d\vol < + \infty.
$$
Then we have
$$
\la_{ess}^{\De}\geq \begin{cases} 2H^2,\quad\textnormal{if}\quad c>0 \\
\dfrac{(4H^2+c)^2}{8H^2},\quad\textnormal{if}\quad H>\dfrac{\sqrt{-c}}{2}\ \textnormal{and}\  c<0 . 
\end{cases}
$$
\end{theorem}

\begin{proof} Using equation (\ref{s}), we can write the Jacobi operator of $\Sigma$ as
$$
J=\De+\left(|S|^2 + \frac{c}{H}\langle ST ,T \rangle\right) + \frac{1}{8H^2}(c|T|^2+4H^2)^2.
$$

We can see, from Corollary \ref{zero}, that, for any $\vep>0$, there exists $R>0$ such that 
$||S|^2+(c/H)\langle ST ,T \rangle| <\vep$ in $\Si\setminus B(R)$. 
Thus, the index form associated to the Jacobi operator $J$ satisfies
$$
I(f,f)\leq \int_\Si|\nabla f|^2d\vol+ \vep\int_\Si f^2 d\vol-\frac{1}{8H^2}\int_\Si(c|T|^2+4H^2)^2f^2d\vol,
$$
for any $f\in C^\infty_0(\Sigma\setminus B(R))$.

Since the index is finite, we can
choose $R>0$ such that $I$ is nonnegative in $\Sigma\setminus B(R)$ (see \cite{fc}).
Hence we get
$$
\int_\Si |\nabla f|^2 d\vol\geq\frac{1}{8H^2}\int_\Si((c|T|^2+4H^2)^2-\vep)f^2 d\vol , 
$$
for any $f\in C^\infty_0(\Sigma\setminus B(R))$.

Finally, we use the fact that $\la_{ess}^\De = \lim_{R\to\infty} \la_1(\Sigma\setminus B(R))$ to conclude the proof.
\end{proof}

\begin{corollary} Let $\Sigma$ be a surface in $\mathbb{M}^2(c)\times\real$ as in Theorem \ref{thm_spec}. Then there exist two positive numbers $C$ and $a$ such that
$$
\Voll(B(R))\geq C e^{aR},
$$
for any radius $R$ big enough.
\end{corollary}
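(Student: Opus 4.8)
The plan is to read off the exponential volume growth from the positivity of the bottom of the essential spectrum furnished by Theorem \ref{thm_spec}, by means of a Brooks-type estimate relating the volume growth rate to $\la_{ess}^{\De}$. The tool I would invoke is the following: for a complete noncompact Riemannian manifold of \emph{infinite} volume one has
$$
\la_{ess}^{\De}\leq\frac{1}{4}\Big(\limsup_{R\to\infty}\frac{\log\Voll(B(R))}{R}\Big)^2 ,
$$
so that a positive spectral gap forces the growth rate $\mu:=\limsup_{R\to\infty}R^{-1}\log\Voll(B(R))$ to satisfy $\mu\geq 2\sqrt{\la_{ess}^{\De}}>0$.

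First I would record that Theorem \ref{thm_spec} provides an explicit positive lower bound $\la_{ess}^{\De}\geq\la_0$, with $\la_0=2H^2$ when $c=1$ and $\la_0=(4H^2-1)^2/(8H^2)$ when $c=-1$ and $H>1/2$. The step that requires genuine care is to verify that $\Sigma$ has infinite volume, since this cannot be supplied by the spectrum alone: a positive essential spectrum is by itself compatible with finite total volume (a hyperbolic cusp has $\la_{ess}^{\De}=1/4$ and bounded volume), and in that degenerate situation the conclusion is false. Here I would exploit the bounded geometry of $\Sigma$ — that $|\sigma|$ is bounded and that $u=|S|\to 0$ uniformly at infinity by Corollary \ref{zero} — to exclude collapsing, cusp-like ends and deduce that the total area is infinite; granting this, Brooks' estimate applies and gives $\mu\geq 2\sqrt{\la_0}>0$.

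Finally, to upgrade the asymptotic rate to the pointwise bound $\Voll(B(R))\geq Ce^{aR}$ valid for \emph{every} large $R$, I would use the regularity of the ends. One may argue directly from $\la_{ess}^{\De}=\lim_{R\to\infty}\la_1(E(R))$: fixing $R_0$ with $\la_1(E(R_0))\geq\la_0/2$ and testing the exterior Poincar\'e inequality against radial functions $f=\phi(d(x_0,\cdot))$, the co-area formula reduces the problem to a one-dimensional weighted inequality for the length density $w(r)=\frac{d}{dr}\Voll(B(r))$; the spectral gap then bounds the logarithmic derivative of $w$ from below, while the infinite-volume input selects the increasing branch and yields growth by a fixed factor over each window of bounded length, whence $\Voll(B(R))\geq Ce^{aR}$ for all large $R$, with $a<\mu$ and a suitable $C>0$. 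The principal obstacle is precisely the pair of geometric facts that must be drawn from the surface rather than from the spectrum: that $\Sigma$ has infinite volume, and that its ends are regular enough (as the finite-total-curvature hypothesis should guarantee, forcing $\liminf_R R^{-1}\log\Voll(B(R))=\mu$) to convert Brooks' limit superior into a genuine lower bound at every sufficiently large radius.
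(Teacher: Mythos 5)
Your proposal is correct and follows exactly the paper's route: the Brooks-type estimate you invoke is precisely the cited result \cite[Theorem 1]{b1}, which together with Theorem \ref{thm_spec} constitutes the paper's entire proof. The two caveats you rightly flag --- that Brooks' theorem requires infinite volume (available here from Frensel's theorem, the paper's reference \cite{f}, since $\Sigma$ is complete, noncompact and has constant mean curvature) and that the $\limsup$ growth rate must be upgraded to a bound valid at every sufficiently large radius (via the standard exterior cutoff argument you sketch) --- are genuine points of care that the paper's one-line proof leaves implicit.
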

\begin{proof} The conclusion follows from Theorem \ref{thm_spec} and an improvement of Brook's Theorem
\cite{b1} given in \cite[Theorem 1]{Hi} (see also \cite[Theorem 1.3]{k}).
\end{proof}

Next, we will prove a result about the bottom of the essential spectrum of the
Jacobi operator of $\Sigma$ and a consequence on surface's Morse index.

\begin{theorem}\label{index}  Let $\Sigma$ be a complete non-minimal cmc surface  in 
$\mathbb{M}^2(c)\times\real$ with finite total curvature. Then we have
$$
\la_{ess}^J \leq \la_{ess}^\De - (2H^2-1).
$$
\end{theorem}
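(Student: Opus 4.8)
The plan is to read off the Jacobi operator $J=\De+V$ exactly as in the proof of Theorem \ref{thm_spec}, splitting its potential into a part that decays at infinity and a part that is bounded below by the desired constant. Concretely, write
$$
V=\underbrace{\Big(|S|^2+\frac{c}{H}\langle ST,T\rangle\Big)}_{=:W}+\underbrace{\frac{1}{8H^2}(c|T|^2+4H^2)^2}_{=:V_0},
$$
and compare the bottoms of the essential spectra of $J$ and of $\De$ through the variational (Persson) characterization
$$
\la_{ess}^J=\lim_{R\to\infty}\inf_{f\in C_0^\infty(\Si\setminus B(R))}\frac{\int_\Si(|\nabla f|^2-Vf^2)\,d\vol}{\int_\Si f^2\,d\vol},\qquad \la_{ess}^\De=\lim_{R\to\infty}\la_1(\Si\setminus B(R)),
$$
which is the same tool already invoked at the end of the proof of Theorem \ref{thm_spec}. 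This is legitimate because, by Remark \ref{remark1}, finite total curvature forces $|\si|$ (hence $V$) to be bounded, so $J$ is a Schr\"odinger operator for which this formula holds.

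The two ingredients I would isolate are a \emph{decay} statement for $W$ and a \emph{pointwise lower bound} for $V_0$. For the first, Corollary \ref{zero} gives that $u=|S|\to0$ uniformly at infinity; combined with $|\langle ST,T\rangle|\le|ST|\,|T|=\frac{1}{\sqrt2}|T|^2|S|\le\frac{1}{\sqrt2}|S|$ (using that $S$ is traceless and $|T|\le1$), this yields $|W|\le|S|^2+\frac{|c|}{\sqrt2\,H}|S|\to0$ uniformly at infinity. For the second, expanding $V_0=2H^2+c|T|^2+\frac{c^2}{8H^2}|T|^4$ and using $0\le|T|^2\le1$ together with the normalization $c=\pm1$ of Theorem \ref{thm_spec} gives the clean pointwise bound $V_0\ge 2H^2-1$ everywhere on $\Si$ (indeed, for $c=1$ one even has $V_0\ge 2H^2$, and for $c=-1$ one drops the nonnegative quartic term and uses $|T|^2\le1$).

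With these in hand the conclusion is immediate. Fix $\vep>0$ and, using the uniform decay of $W$, choose $R_\vep$ so that $|W|<\vep$ on $\Si\setminus B(R)$ for every $R\ge R_\vep$. Then for all such $R$ and every $f\in C_0^\infty(\Si\setminus B(R))$,
$$
\int_\Si(|\nabla f|^2-Vf^2)\,d\vol\le\int_\Si|\nabla f|^2\,d\vol-(2H^2-1-\vep)\int_\Si f^2\,d\vol,
$$
whence, dividing by $\|f\|_2^2$ and taking the infimum, $\la_1^J(\Si\setminus B(R))\le\la_1(\Si\setminus B(R))-(2H^2-1)+\vep$. Letting $R\to\infty$ in the Persson characterization gives $\la_{ess}^J\le\la_{ess}^\De-(2H^2-1)+\vep$ for every $\vep>0$, and then $\vep\to0$ yields $\la_{ess}^J\le\la_{ess}^\De-(2H^2-1)$.

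I expect the only real subtlety to be the rigorous justification of the Persson/decomposition-principle formula for $\la_{ess}^J$: one must know a priori that $V$ is bounded (which comes from Remark \ref{remark1}) so that $J$ is well behaved and its essential spectrum is insensitive to the compactly supported modifications implicit in passing to $\Si\setminus B(R)$. Once this functional-analytic framework is granted, the geometry enters only through the two elementary estimates above, so the remainder is routine.
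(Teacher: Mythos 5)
Your proposal is correct and follows essentially the same route as the paper: the paper's proof likewise writes $J=\De+V$ with $V=\bigl(|S|^2+\frac{c}{H}\langle ST,T\rangle\bigr)+\bigl(c|T|^2+\frac{c^2|T|^4}{8H^2}+2H^2\bigr)$, uses Corollary \ref{zero} to make the first summand smaller than $\vep$ outside $B(R)$, bounds the second below by $2H^2-1$ via $|T|^2\le 1$, and concludes through the characterization $\la_{ess}=\lim_{R\to\infty}\la_1(\Si\setminus B(R))$ already invoked in Theorem \ref{thm_spec}. Your write-up merely makes explicit two points the paper leaves implicit (the boundedness of $V$ via Remark \ref{remark1} needed for the Persson-type formula, and the case distinction $c=\pm1$), which is a gain in rigor, not a different argument.
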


\begin{proof} From Corollary \ref{zero}, we have that for any $\vep>0$, there exists $R>0$ such that 
$||S|^2+(c/H)\langle ST ,T \rangle| <\vep$ in $\Si\setminus B(R)$. Then the Jacobi operator $J$ 
of the surface satisfies
\begin{align*}
J&=\De-|T|^2 + |S|^2+\dfrac{c}{H}\langle ST, T\rangle + \dfrac{c^2 |T|^4}{8H^2} + 2H^2 \\
&\geq  \De + 2H^2-|T|^2 -\vep\geq \De +2H^2-1-\vep,
\end{align*}
which leads to the conclusion.
\end{proof}

\begin{corollary}
Let $\Sigma$ be a complete cmc surface  in $\mathbb{M}^2(c)\times\real$ with  $H > \dfrac{1}{\sqrt{2}}$
and finite total curvature. Assume that 
$$
\Voll(B(R))\leq C e^{aR},
$$
for any $R>0$ and some positive constants $C$ and $a<2\sqrt{2H^2-1}$. Then $\Sigma$ has infinite index.
\end{corollary}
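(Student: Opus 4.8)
The plan is to reduce the statement to determining the sign of the bottom $\la_{ess}^J$ of the essential spectrum of the stability operator $-J$, and then to produce this sign from the volume-growth hypothesis by means of a Brooks-type estimate. I interpret $\la_{ess}^J$ as the bottom of the essential spectrum of the self-adjoint operator $-J$, with $J$ the Jacobi operator appearing in the proof of Theorem \ref{index}.

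First I would recall the variational description of the Morse index: it equals the dimension of a maximal subspace of $C_0^\infty(\Sigma)$ on which the index form $I(f,f)=-\int_\Sigma fJf\,d\vol$ is negative, equivalently the number of negative eigenvalues of $-J$. The qualitative fact I would exploit is that $\la_{ess}^J<0$ forces the index to be infinite: if the essential spectrum of $-J$ meets $(-\infty,0)$, then the spectral projection of $-J$ onto $(-\infty,0)$ has infinite rank, producing an infinite-dimensional space of test functions on which $I$ is negative. Thus the entire problem reduces to proving $\la_{ess}^J<0$.

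Second, since $\Sigma$ has finite total curvature, Theorem \ref{index} applies and gives $\la_{ess}^J\leq\la_{ess}^\De-(2H^2-1)$, so it suffices to bound $\la_{ess}^\De$ from above in terms of the volume growth. Here the main analytic ingredient is a Brooks-type inequality for the essential spectrum: the hypothesis $\Voll(B(R))\leq Ce^{aR}$ forces the exponential volume-growth rate $\mu=\limsup_{R\to\infty}\big(\log\Voll(B(R))\big)/R$ to satisfy $\mu\leq a$, and therefore $\la_{ess}^\De\leq\mu^2/4\leq a^2/4$. This is exactly \cite[Theorem 1]{b1}, the same estimate that, read in the reverse direction, produced the exponential lower volume bound in the Corollary following Theorem \ref{thm_spec}.

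Finally I would combine the two bounds. Since $H>1/\sqrt{2}$ we have $2H^2-1>0$, so the hypothesis $a<2\sqrt{2H^2-1}$ is meaningful and gives $a^2/4<2H^2-1$; hence
$$
\la_{ess}^J\leq\frac{a^2}{4}-(2H^2-1)<0,
$$
and the index is infinite by the first step. I expect the main obstacle to be invoking the correct, essential-spectrum version of Brooks' estimate—as opposed to the bottom-of-spectrum version, which only controls $\la_0$ and not $\la_{ess}^\De$—together with checking that the operator inequality of Theorem \ref{index} indeed passes to the level of essential spectra; both points become routine once the characterization $\la_{ess}^\De=\lim_{R\to\infty}\la_1(\Sigma\setminus B(R))$ used in the proof of Theorem \ref{thm_spec} is applied to both $\De$ and $-J$.
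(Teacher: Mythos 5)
Your proposal is correct and reproduces essentially the paper's own argument: the paper's one-line proof combines Theorem \ref{index} with a volume-growth bound on $\la_{ess}^{\De}$ to conclude $\la_{ess}^{J}<0$, which is exactly your chain $\la_{ess}^{J}\leq\frac{a^2}{4}-(2H^2-1)<0$, with the final implication ``negative essential spectrum $\Rightarrow$ infinite index'' left implicit in the paper (your spectral-projection justification of it is the standard one, cf.\ \cite{fc}). The only substantive difference is the source of the spectral estimate: the paper cites \cite[Theorem 3.1]{cz}, which gives $\la_{ess}^{\De}\leq a^2/4$ directly from $\Voll(B(R))\leq Ce^{aR}$ via the characterization $\la_{ess}^{\De}=\lim_{R\to\infty}\la_1(\Sigma\setminus B(R))$, whereas you invoke Brooks \cite[Theorem 1]{b1}, which is indeed the essential-spectrum (not bottom-of-spectrum) version but carries an infinite-volume hypothesis that you do not verify. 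That hypothesis is free in this setting: a complete noncompact surface satisfying the Hoffman--Spruck inequality \eqref{sobolev} has infinite volume (see \cite{f}), a fact the paper itself uses in the proof of Theorem \ref{theo-compact0}; note also that both your argument and the paper's tacitly assume $\Sigma$ is noncompact, since for compact $\Sigma$ the essential spectrum is empty and the statement degenerates.
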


\begin{proof} We use Theorem \ref{index} and  \cite[Theorem 3.1]{cz} to conclude 
that $\lambda_{ess}^J < 0$.
\end{proof}

We end by presenting some examples of surfaces that satisfy the hypotheses in Theorem \ref{index}.

\begin{example} In \cite[Theorem3]{ar}  the authors
described three distinct classes of complete non-compact,  possibly immersed,  cmc surfaces
$\Sigma$ in $\mathbb{H}^2\times\real$ such that the quadratic differential $Q^{(2,0)}$, and therefore $S$, vanishes 
on $\Sigma$ and $H<1/2$. More precisely, $\Sigma$ is either
\begin{enumerate}
  \item[$(i)$]  a convex rotationally invariant constant mean curvature graph $D_H^2$ over
    the horizontal leaf $M^2(c)\times\{t_0\}$;

\item[$(ii)$] an embedded annulus, rotationally invariant constant mean curvature surface $C_H^2$
    with two asymptotically conical ends;

\item[$(iii)$] the embedded constant mean curvature surface $P_H^2$; it is an orbit under some
    two dimensional solvable subgroup of ambient isometries. 
\end{enumerate}
\end{example}

\begin{example}
Let  $\Sigma$ be the unit disk $\mathbb{D}\subset \mathbb C$ endowed with a 
complete metric $\lambda^2|dz|^2$. Let $f$ be any holomorphic function on a domain containing  $\mathbb{D}$ in its interior 
and satisfying $|f|\leq \lambda$. 

According to  \cite[Proposition\, 14]{FM} there exists a 2-parameter family of entire $H = 1/2$ graphs in
$\mathbb{H}^2\times\mathbb{R}$  whose Abresch-Rosenberg differential is precisely $4f\, dz^2$
(see also \cite[Theorem 16]{FM1}).

Next, we will verify that such surfaces have finite total curvature. In fact, by the definition of $S$, we have
$$
2H\langle SX,Y\rangle=Q(X,Y)-\frac{\trace Q}{2}\langle X,Y\rangle.
$$

Consider $\{\partial_u, \partial_v\}$ an orthonormal frame field associated to some isothermal coordinates and 
let $\partial_z=\partial_u-i\partial_v$. Using this notation, we have
$$4f=Q^{(2,0)} = 2H\langle S\partial_z, \partial_z\rangle = 2H\lambda^2(S_{11} - S_{22} -2iS_{12})=4H\lambda^2(S_{11} - iS_{12}),$$
where we used the fact that $S$ is traceless, and then
$$|f|^2 = H^2\lambda^4(S_{11}^2+S_{12}^2) = H^2\lambda^4|S|^2. $$

Therefore, we get
$$
\int_\Sigma|S|^2\, d\Sigma = \int_\mathbb{D}|S|^2\lambda^2\, dxdy=
\dfrac{1}{H^2}\int_\mathbb{D}|f|^2\lambda^{-2}\, dxdy\leq 
\dfrac{\mbox{Area}(\mathbb{D})}{H^2}=\dfrac{\pi}{H^2},
$$
as claimed. 
\end{example}

\end{document}